\newtheorem{theorem}{Theorem}[section]
\newtheorem{lemma}[theorem]{Lemma}
\theoremstyle{remark}
\newtheorem{example}{Example}
\newcommand{\argmax}{\raisebox{-1.8mm}{${{\displaystyle \rm arg\,\! max}
                     \atop {\scriptstyle x \in [0,1]}}$}}
\newcommand{\aargmax}{\raisebox{-1.8mm}{${{\displaystyle \rm arg\,\! max}
                     \atop {\scriptstyle 1 \le i \le n}}$}}
\begin{document}

\begin{frontmatter}
\title{Exact and asymptotic goodness-of-fit tests based on the maximum and its location of the empirical process}
\runtitle{An exact goodness-of-fit test}

\begin{aug}
\author{\fnms{Dietmar} \snm{Ferger}\ead[label=e1]{dietmar.ferger@tu-dresden.de}}


\affiliation{Technische Universit\"{a}t Dresden, Fakult\"{a}t Mathematik}

\address{Fakult\"{a}t Mathematik\\
 Technische Universit\"{a}t Dresden\\
 D-01069 Dresden,
Germany\\
\printead{e1}\\
}

\end{aug}

\begin{abstract}
The supremum of the standardized empirical process is a promising statistic for
testing whether the distribution function $F$ of i.i.d. real random variables
is either equal to a given distribution function $F_0$ (hypothesis) or $F \ge F_0$ (one-sided alternative).
Since \cite{r5} it is well-known that an affine-linear transformation of the suprema converge
in distribution to the Gumbel law as the sample size tends to infinity. This enables the
construction of an asymptotic level-$\alpha$ test. However, the rate of convergence is extremely slow.
As a consequence the probability of the type I error is much larger than $\alpha$ even for sample sizes
beyond $10.000$. Now, the standardization consists of the weight-function $1/\sqrt{F_0(x)(1-F_0(x))}$.
Substituting the weight-function by a suitable random constant leads to a new test-statistic, for which we
can derive the exact distribution (and the limit distribution) under the hypothesis.
A comparison via a Monte-Carlo simulation shows that the new test is uniformly better than the Smirnov-test and
an appropriately  modified test due to \cite{r20}. Our methodology also works for the two-sided alternative $F \neq F_0$.
\end{abstract}

\begin{keyword}
\kwd{goodness of fit, empirical process}
\kwd{measurability and continuity of the argmax-functional}
\end{keyword}

\end{frontmatter}

\section{Introduction}
Let $X_1,\ldots,X_n$ be $n \in \mathbb{N}$ independent and identically distributed real random variables defined on a probability space $(\Omega, \mathcal{A}, \mathbb{P})$ and with common distribution function $F$. Throughout the paper it is assumed that $F$ is continuous.
Given a (continuous) distribution function $F_0$ we want to test the hypothesis  $H_0:F=F_0$ versus the alternative $H_1:F \neq F_0, F \ge F_0$.
If
$$
 F_n(x)=\frac{1}{n} \sum_{i=1}^n 1_{\{X_i \le x\}}, \; x \in \mathbb{R},
$$
is the empirical distribution function, then the well-known Smirnov-statistic is given by
\begin{equation} \label{Mn}
 M_n := \sup_{x \in \mathbb{R}} \{F_n(x)-F_0(x)\}= \max_{1 \le i \le n} \{\frac{i}{n}-F_0(X_{i:n})\},
\end{equation}
where $X_{i:n}$ denotes the $i-$th order statistic and the equality holds almost surely (a.s.). By the Glivenko-Cantelli Theorem
$D_n := F_n-F_0$ converges a.s. to $D = F-F_0$ uniformly on $\mathbb{R}$. In order to make the Smirnov-test more
sensitive for deviations of $F$ from $F_0$ in the tails of $F_0$ one could use
$$
 W_n := \sup_{x:0<F_0(x)<1}\frac{F_n(x)-F_0(x)}{\sqrt{F_0(x)(1-F_0(x))}}.
$$
Here,
$$
Q_n(x):=\frac{F_n(x)-F_0(x)}{\sqrt{F_0(x)(1-F_0(x))}} \rightarrow Q(x)=\frac{F(x)-F_0(x)}{\sqrt{F_0(x)(1-F_0(x))}}
$$
a.s. for every $x \in \mathbb{R}$ with $0<F_0(x)<1$. The following example shows that this approach seems to be very promising.

\begin{example} \label{exp1} Let $\tau \in \mathbb{R}$ and $\delta > 1$. Define $F=F_{\tau,\delta}$ by
$$
 F(x):= \left\{ \begin{array}{l@{\quad,\quad}l}
                 \delta F_0(x) & x \le \tau\\ \beta (F_0(x)-F_0(\tau))+ \delta F_0(\tau) & x > \tau,
               \end{array} \right.
$$
If $\delta F_0(\tau) < 1$ and $\beta$ satisfies (*) $\delta F_0(\tau)+\beta(1-F_0(\tau))=1$, then $F$ is a distribution function. Further, one verifies that
$$
 D(x)= \left\{ \begin{array}{l@{\quad,\quad}l}
                 (\delta-\beta)(1-F_0(\tau)) F_0(x) & x \le \tau\\ (\delta-\beta)F_0(\tau)(1-F_0(x)) & x > \tau,
               \end{array} \right.
$$
and thus
$$
 Q(x)=\left\{ \begin{array}{l@{\quad,\quad}l}
                 (\delta-\beta)(1-F_0(\tau)) \sqrt{F_0(x)/(1-F_0(x))} & x \le \tau\\ (\delta-\beta)F_0(\tau)\sqrt{(1-F_0(x))/F_0(x)} & x > \tau.
               \end{array} \right.
$$
Since $\delta>1$ and therefore by (*) $\beta<1$ we see that $\delta-\beta$ is positive and hence $D=F-F_0>0$. This shows that $F$ lies in the alternative $H_1$.
Moreover it follows that $\tau$ is a maximizing point of $D$ and $Q$ as well. In particular
\begin{align*}
M:= \max_{x \in \mathbb{R}} D(x) &= (\delta-\beta) F_0(\tau)(1-F_0(\tau))\\
                                 &< (\delta-\beta) \sqrt{F_0(\tau)(1-F_0(\tau))}=\sup_{x:0<F_0(x)<1}Q(x)=:S.
\end{align*}

Thus the supremum $S$ is larger than the maximum $M$ by the factor $1/\sqrt{F_0(\tau)(1-F_0(\tau))}$, which
increases to infinity as $F_0(\tau) \rightarrow 0$ or $F_0(\tau) \rightarrow 1$. Recall that we are interested in detecting
deviations in the tails of $F_0$. For instance if $F_0(\tau)=0.01$ then the supremum $S$ is about ten times larger than the maximum $M$, namely
$S=10.0504*M$. Now both tests reject the hypothesis for large values of $M_n \approx M$ and $W_n \approx S$, whence we strongly expect
that the $W_n$-test is much more likely to indicate the alternative than the Smirnov-test.
\end{example}

So, as far as the behaviour on the alternative is concerned the $W_n$-test should be the better candidate. However, a serious problem occurs when we want to
determine the critical values or $p$-values.  Here we need the exact or at least asymptotic distribution of the underlying test-statistics $M_n$ and $W_n$.
In case of $M_n$ the exact and the asymptotic distribution are known since the publication of \cite{r1}:
\begin{equation} \label{Smir}
 \mathbb{P}(M_n \le x)=1-\sum_{i=0}^{[n(1-x)]} x {n \choose i}(x+\frac{i}{n})^{i-1}(1-x-\frac{i}{n})^{n-i}, \quad 0<x<1,
\end{equation}
where $[\cdot]$ denotes the floor function. From the exact formula \cite{r1} deduces the asymptotic distribution as $n \rightarrow \infty$:
$$
 \mathbb{P}(\sqrt{n} M_n\le x) \rightarrow 1-\exp\{-2x^2\}, \; x \ge 0.
$$
In contrast to $M_n$ an explicit expression for the distribution function of $W_n$ for finite sample size $n \in \mathbb{N}$ is not known in the literature.
Even worse, \cite{r2} shows that $W_n$ converges to infinity in probability, whence the construction of an asymptotic level-$\alpha$ test fails.
But there is a way out by the following limit theorem of \cite{r5}:
\begin{equation} \label{Jaeschke}
 \mathbb{P}(\sqrt{n} W_n\le \frac{x+b_n}{a_n}) \rightarrow e^{-e^{-x}} \; \forall \; x \in \mathbb{R},
\end{equation}
where $a_n=\sqrt{2 \log \log n}$ and $b_n=2 \log \log n+ \frac{1}{2}\log \log \log n -\frac{1}{2} \log \pi$. Thus
$$
c_{\alpha,n}=\frac{-\log(-\log(1-\alpha))+b_n}{a_n}
$$
yields that
$$
 \mathbb{P}(W_n> c_{\alpha,n}) \rightarrow \alpha
$$
and therefore the test with rejection region $\{W_n > c_{\alpha,n}\}$ is an asymptotic level-$\alpha$ test.
Unfortunately the convergence in (\ref{Jaeschke}) to an extreme-value distribution (Gumbel) is known to be very (very) slow. As a consequence there are poor approximations of the exact critical values by $c_{\alpha,n}$ even for large sample sizes $n$. In particular, the probability of the type I error is much larger than
the given level $\alpha$ of significance. For instance given $\alpha=0.1$ and $n=30, 300, 5000, 10.000$ the true probabilities of type I error are equal to $0.18913, 0.18524, 0.18388, 0.18211$, which are more than $1.8$ times greater than the required level of significance. If $\alpha=0.01$ the factor increases dramatically. Indeed even for the very large sample size
$n=10.000$ the true error probability is equal to $0.0671$ and thus more than six times bigger! From this point of view the $W_n$-test is unacceptable and we look for
an alternative. (We obtained the above probabilities by a Monte-Carlo simulation with $10^5$ replicates upon noticing that $W_n$ is distribution-free under the hypothesis $H_0$.)\\

Let us explain the basic idea for the construction of our new test. Assume the alternative $F$ is such that $D=F-F_0$ has a unique maximizing point $\tau$ (as for instance when $F$ is as in the above example).
To make the test still sensitive for small deviations of $F$ from $F_0$ in the tails of $F_0$ we
replace the weight function $1/\sqrt{F_0(x)(1-F_0(x))}$ by the constant $1/\sqrt{F_0(\tau)(1-F_0(\tau)}$. Since $\tau$ is unknown we estimate it by
$$
 \tau_n := \argmax \{F_n(x)-F_0(x\}) \stackrel{a.s.}{=} X_{R:n},
$$
where
$$
 R := \aargmax \frac{i}{n}-F_0(X_{i:n}) := \min\{1 \le k \le n: \frac{i}{n}-F_0(X_{i:n}) \le \frac{k}{n}-F_0(X_{k:n}) \; \forall \; 1 \le i \le n\}.
$$
By (\ref{Mn}) the estimator $\tau_n$ is the smallest maximizing point of $D_n=F_n-F_0$. According to Corollary 2.3 of \cite{r8} $\tau_n$ converges to $\tau$ a.s.
(In fact \cite{r8} considers minimizing points of $D_n$, but the arguments there can easily be carried over to maximizing points.)
Herewith it follows that
$$
 Q_n^*(x):= \frac{F_n(x)-F_0(x)}{F_0(\tau_n)(1-F_0(\tau_n))} \rightarrow Q^*(x)= \frac{F(x)-F_0(x)}{F_0(\tau)(1-F_0(\tau))}
$$
a.s. for each $x \in \mathbb{R}$. Our test rejects the hypothesis $H_0$ for large values of
$$
W_n^*:=\sup_{x\in \mathbb{R}} Q_n^*(x)=\frac{M_n}{\sqrt{F_0(X_{R:n})(1-F_0(X_{R:n}))}}=\frac{\frac{R}{n}-F_0(X_{R:n})}{\sqrt{F_0(X_{R:n})(1-F_0(X_{R:n}))}}.
$$
In the situation of the above example one has that
$$
 Q^*(x)= \left\{ \begin{array}{l@{\quad,\quad}l}
                 (\delta-\beta)\sqrt{(1-F_0(\tau))/F_0(\tau)} F_0(x)) & x \le \tau\\ (\delta-\beta)\sqrt{F_0(\tau)/(1-F_0(\tau))} (1-F_0(x)) & x > \tau
               \end{array} \right.
$$
and consequently $\sup_{x \in \mathbb{R}} Q^*(x) = \sup_{x \in \mathbb{R}} Q(x)=S$. Thus there is good hope that the $W_n^*$-test has a power on $H_1$ comparably as good as the
$W_n$-test. But in contrast to the latter we can determine not only the asymptotic but also the finite sample null-distribution of $W_n^*$.\\

Our methodology also works in case of the two-sided alternative $H_2: F \neq F_0$. Here, the Kolmogorov-Smirnov statistic
\begin{equation} \label{KS}
K_n=\sup_{x \in \mathbb{R}} |F_n(x)-F_0(x)|=\max_{1 \le i \le n} \max\{i/n-F_0(X_{i:n}),(i-1)/n-F_0(X_{i:n})\}
\end{equation}
again can be made more sensitive by weighting with $1/\sqrt{F_0(x)(1-F_0(x))}$. The resulting
statistic
$$
 V_n = \sup_{x:0<F_0(x)<1}\frac{|F_n(x)-F_0(x)|}{\sqrt{F_0(x)(1-F_0(x))}}
$$
exhibits the same problems as its one-sided counterpart $W_n$: There is no explicit formula for its finite sample size distribution and
$\sqrt{n}V_n \stackrel{\mathbb{P}}{\rightarrow} \infty$. However by \cite{r5}
\begin{equation} \label{Jaeschke2}
 \mathbb{P}(\sqrt{n} V_n\le \frac{x+b_n}{a_n}) \rightarrow e^{-2 e^{-x}} \; \forall \; x \in \mathbb{R},
\end{equation}
but again the rate of convergence is extremely slow. As an alternative test-statistic we introduce
$$
 V_n^*:= \frac{K_n}{\sqrt{F_0(\sigma_n)(1-F_0(\sigma_n))}},
$$
where $\sigma_n$ is the (smallest) maximizing point of $|F_n-F_0|$. Some elementary considerations show that $\sigma_n=X_{r:n}$, where
$$
 r = \aargmax \max\{\frac{i}{n}-F_0(X_{i:n}),F_0(X_{i:n})-\frac{i-1}{n}\}.
$$
In particular, $V_n^*$ can be computed by the formula
$$
 V_n^*=\frac{\max\{\frac{r}{n}-F_0(X_{r:n}),F_0(X_{r:n})-\frac{r-1}{n}\}}{\sqrt{F_0(X_{r:n})(1-F_0(X_{r:n}))}}.
$$
In contrast to its one-sided counterpart $W_n^*$ the exact distribution of $V_n^*$ is not known, but we are still able to derive its limit distribution.

By the quantile-transformation the statistics $M_n, W_n, K_n$ and $V_n$ are distribution-free under the hypothesis. We will see that our statistics $W_n^*$ and $V_n^*$ share this property, see Lemma \ref{a1}.\\

Notice that $V_n$ is a generalized Kolmogorov-Smirnov statistic
\begin{equation} \label{GK}
K_{n,\Phi}= \sup_{0<F_0(x)<1}|F_n(x)-F_0(x)|\Phi(F_0(x))
\end{equation}
and $W_n$ is a generalized Smirnov statistics
\begin{equation} \label{GS}
S_{n,\Phi}= \sup_{0<F_0(x)<1}\{F_n(x)-F_0(x)\}\Phi(F_0(x))
\end{equation}
pertaining to $\Phi(u)=1/\sqrt{u(1-u)}$. Utilization of generalized statistics as in (\ref{GK}) and (\ref{GS}) with $\Phi:(0,1) \rightarrow [0,\infty)$
is not new in the literature and in fact has a long history. We give a few examples.
\cite{r10} considers $\Phi(u)= 1_{[a,1]}(u)u^{-1}$ with fixed $0<a<1$.
His motivation was to measure the relative error (on the region $\{F_0 \ge a\})$ rather than the absolute error. He derives
the limit distributions of $\sqrt{n}K_{n,\Phi}$ and $\sqrt{n}S_{n,\Phi}$ under $F=F_0$, whereas the exact distribution of $K_{n,\Phi}$ and $S_{n,\phi}$ has been found by
\cite{r11} and \cite{r12}, respectively. Later on \cite{r17} extends his results to $\Phi(u)=1_{[a,b)}(u) u^{-1}, 0<a<b<1$. If one is interested in detecting differences in the tails $\{F_0\le a\})$ then
the $\Phi(u)=1_{(0,a]}u^{-1}$ can be used. The exact distribution of the pertaining
$$
 S_{n,\Phi}= \sup_{0<F(x)\le a}\frac{F_n(x)-F(x)}{F(x)}
$$
can be deduced from the exact result of \cite{r13} for $\sup_{0<F(x)\le a}\frac{F_n(x)}{F(x)}$ with fixed $0<a\le 1$.
Further examples are
$\Phi(u)$ is equal to
$$
1_{(a,b)}(u) \text{ or } 1-1_{(a,b)}(u) \text{ or } 1_{(a,b)}(u)(u(1-u))^{-1/2} \text{ with } 0<a<b<1.
$$
In all these examples one can detect discrepancies only over certain parts of the real line. Therefore it is more effective to put aside this restriction leading to $\Phi(u)=u^{-1}$, that is to
$$
 \sup_{0<F_0(x)<1}\frac{F_n(x)-F_0(x)}{F_0(x)}=L_n -1,
$$
where
\begin{equation} \label{Ln}
 L_n=\sup_{0<F_0(x)<1}\frac{F_n(x)}{F_0(x)}=\max_{1 \le i \le n} \frac{i}{n F_0(X_{i:n})}.
\end{equation}
It follows from \cite{r18}, confer also \cite{r19}, p.345, that under the hypothesis $H_0$ for each $n \in \mathbb{N}$,
\begin{equation} \label{Daniels}
 \mathbb{P}(L_n \le x)= 1-\frac{1}{x} \quad \text{for all } x \ge 1.
\end{equation}
All weight functions considered so far (up to $\Phi(u)=1/u$ and $\Phi(u)=1/\sqrt{u(1-u)}$) have in common that they are bounded. \cite{r14} use Donsker's theorem in combination with the Continuous Mapping Theorem
to show that the limit distributions are boundary-non-crossing probabilities of the Brownian bridge $B$, which with Doob's transformation can be rewritten as boundary-non-crossing probabilities of the Brownian motion. However, as the authors themselves state their formulas are such that ''the analytic difficulties of getting an explicit solution may be prohibitive''. For weight-functions which are not necessarily bounded we refer to \cite{r15}, Theorem 3.3 on p.220.
In case that $F=F_0$ is equal to the uniform distribution they show that
$$
 \sqrt{n} K_{n,\Phi}= \sqrt{n} \sup_{0<x<1}|F_n(x)-x|\Phi(x) \stackrel{\mathcal{D}}{\rightarrow} \sup_{0<x<1}|B(x)|\Phi(x).
$$
if and only if $1/\Phi$ belongs to a Chibisov-O'Reily class.

\cite{r20} propose the test-statistic
$$
 T_n=T_n(w):= \max\{w L_n,\sqrt{n} K_n,w U_n\}
$$ with
\begin{equation} \label{Un}
U_n:=\sup_{0<F_0(x)<1}\frac{1-F_n(x)}{1-F_0(x)}=\max_{1 \le i \le n}\frac{n-i}{n(1-F_0(X_{i:n}))}
\end{equation}
and $w \in \mathbb{R}$ is a positive weight. They prove that $(L_n,\sqrt{n} K_n,U_n)$ are asymptotically independent. More precisely, one has for all $a,c \ge 1$ and for all $b \ge 0$ that
\begin{equation} \label{MS}
 \mathbb{P}(L_n \le a, \sqrt{n} K_n \le b, U_n \le c) \rightarrow (1-1/a) G(b) (1-1/c), \; n \rightarrow \infty,
\end{equation}
where $G$ is the Kolmogorov-Smirnov distribution function. If for a given level  $\alpha \in (0,1)$ of significance
$x_\alpha :=G^{-1}((1-\alpha)^{1/3})$ and $w:=x_\alpha(1-(1-\alpha)^{1/3})$, then by (\ref{MS})
$$
 \mathbb{P}(T_n(w)>x_\alpha) \rightarrow \alpha.
$$
Thus the test with rejection region $\{T_n(w)>x_\alpha\}$ is an asymptotic level-$\alpha$ test. For instance $\alpha=0.05$ yields
$y_\alpha=1.544$ and $w=0.0261$. \cite{r20} give tables of the exact critical values $x_{\alpha,n}$ for selected sample sizes $n \in \mathbb{N}$ and
$\alpha \in \{0.1,0.05,0.01\}$. For the computation of these values they use that by (\ref{KS}), (\ref{Ln}) and (\ref{Un}) and freedomness of distribution under $H_0$
the probability $\mathbb{P}(T_n(w)\le x)$ can be rewritten as a rectangle probability for uniform order statistics. These in turn are calcultated by the recursion formula of \cite{r21}, confer also \cite{r19}, p. 362.

The counterpart of $T_n(w)$ designed for the one-sided alternative $H_1$ is
$$
 T^+_n:=T^+_n(w):= \max\{w L_n,\sqrt{n} M_n,w U_n\}.
$$
Carrying over the arguments in the proof of Theorem 1 in \cite{r20} one shows that
\begin{equation} \label{MS2}
 \mathbb{P}(L_n \le a, \sqrt{n} M_n \le b, U_n \le c) \rightarrow (1-1/a) (1-\exp(-2 b^2)) (1-1/c), \; n \rightarrow \infty.
\end{equation}

Following the procedure of \cite{r20} we put
\begin{equation} \label{w}
y_\alpha:=\sqrt{-\frac{1}{2} \log(1-(1-\alpha)^{1/3})} \quad \text{and} \quad w:= w_\alpha :=y_\alpha(1-(1-\alpha)^{1/3}).
\end{equation}
Then by (\ref{MS2}) it follows that
$$
  \mathbb{P}(T^+_n(w)>y_\alpha) \rightarrow \alpha
$$
and therefore $\{T^+_n(w)>y_\alpha\}$ is the rejection region of an asymptotic level-$\alpha$ test.
By (\ref{Mn}), (\ref{Ln}) and (\ref{Un}) one has  that under $H_0$
\begin{equation} \label{Tn+}
 \mathbb{P}(T^+_n(w) \le y)=\mathbb{P}(a_i \le X_{i:n} \le b_i \; \forall \; 1 \le i \le n),
\end{equation}
where $a_i=\max\{\frac{w}{y}\frac{i}{n},\frac{i}{n}-\frac{y}{\sqrt{n}}\}$ and $b_i=1-\frac{w}{y}(1-\frac{i}{n})$.
Moreover, the $X_{i:n}$ are the uniform order statistics. For the computations of the exact critical values $y_{\alpha,n}$ pertaining to
the sample size $n \in \mathbb{N}$ we prefer to use the formula of \cite{r22}:
\begin{equation} \label{Steck}
\mathbb{P}(a_i \le X_{i:n} \le b_i \; \forall \; 1 \le i \le n)= \text{det}(H_n),
\end{equation}
where the $ij-$th element $m_{i,j}$ of $H_n$ is equal to ${j \choose j-i+1}(b_i-a_j)_+^{j-i+1}$ or zero according as $j-i+1 \ge 0$ or not $(1 \le i,j \le n)$ and $(x)_+=\max\{x,0\}$. Thus $H_n$ is  an upper Hessenberg matrix, for which \cite{r23} prove the following recursion: det$(H_0)=1$, det$(H_1)=m_{1,1}$ and for $n \ge 2$:
\begin{equation} \label{rec}
 \text{det}(H_n)=m_{n,n} \; \text{det}(H_{n-1})+\sum_{r=1}^{n-1}\Big((-1)^{r-n} m_{r,n}\; \text{det}(M_{r-1}) \prod_{j=r}^{n-1} m_{j+1,j}\Big).
\end{equation}

With the help of (\ref{Tn+})-(\ref{rec}) we are able to calculate the exact critical values $y_{\alpha,n}$, which satisfy
$\mathbb{P}(T^+_n(w) > y_{\alpha,n}) = \alpha$ under $H_0$, see Table \ref{crit} below.\\

The paper is organized as follows: In the next section we derive the exact distribution of $W^*_n$ and the asymptotic distributions of
$\sqrt{n} W^*_n$  and $\sqrt{n} V^*_n$ under the hypothesis. In section 3 these results are used to determine the exact critical values
of the corresponding test statistics. In addition we present a table of exact critical values of the one-sided Mason-Schuenemeyer test (MS-test) based on $T^+_n(w)$. Afterwards we compare our new test with the Smirnov-test (S-test) and the MS-test test in a small simulation study.
It turns out that our test significantly performs better and surprisingly that the MS-test is inferior to the S-test. Finally, in the appendix we first prove that our test-statistics are distribution-free under the hypothesis. Moreover, it is shown that the argmax-functional appropriately defined on the Shorokhod-space is Borel-measurable and continuous on the subspace of all continuous functions with a unique maximizing point.
This result is essential for deriving the limit distributions via the Continuous Mapping Theorem.

\section{Exact and asymptotic null-distributions}

\begin{theorem} \label{exact} If $F=F_0$, then for all $x > 0$,
$$
 \mathbb{P}(W_n^* \le x)=1-\sum_{k=1}^n q_n[s(n^{-1}k,x),k]\;,
$$
where
$$
 s(c,x)=\frac{2c+x^2-x\sqrt{4c(1-c)+x^2}}{2(1+x^2)} \in (0,c), \; c \in (0,1],
$$
and
\begin{align*}
&q_n[z,k]={n-1 \choose k-1}(z\wedge\frac{k}{n})^k(1-z\wedge \frac{k}{n})^{n-k}\\
&- n^{-n} \sum_{i=0}^{k \wedge [nz]-1}\sum_{j=k}^n {n \choose j}{j \choose i}(n-nz \wedge k)^{n-j-1} (nz \wedge k -i-1)^{j-i}(j-nz \wedge k)(i+1)^{i-1}
\end{align*}
for all $z \in [0,1)$ and $k \in \{1,\ldots,n\}$.
The probability is equal to zero for all $x \le 0$.
\end{theorem}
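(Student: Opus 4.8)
The plan is to use the distribution-freeness established in Lemma \ref{a1} to assume that $F_0$ is the uniform distribution on $[0,1]$, so that $X_{i:n}=U_{i:n}$ are uniform order statistics and $W_n^*=(\tfrac Rn-U_{R:n})/\sqrt{U_{R:n}(1-U_{R:n})}$, with $R$ the smallest index maximizing $D_i:=\tfrac in-U_{i:n}$. Since $U_{n:n}<1$ one has $M_n>0$ and hence $W_n^*>0$ almost surely, which already gives $\mathbb{P}(W_n^*\le x)=0$ for $x\le 0$. For $x>0$ I would decompose the complementary event according to the value of $R$,
$$
\mathbb{P}(W_n^*>x)=\sum_{k=1}^n \mathbb{P}(R=k,\,W_n^*>x),
$$
and aim to show that the $k$-th summand equals $q_n[s(k/n,x),k]$.

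First I would pin down the event on $\{R=k\}$. There $W_n^*>x$ reads $\tfrac kn-U_{k:n}>x\sqrt{U_{k:n}(1-U_{k:n})}$; as the right side is nonnegative this forces $U_{k:n}<k/n$, and squaring yields the quadratic $(1+x^2)u^2-(2c+x^2)u+c^2>0$ with $c=k/n$. Evaluating this quadratic at $u=c$ gives $x^2c(c-1)<0$, so $c$ lies strictly between its two roots; consequently, on $\{U_{k:n}<c\}$ the inequality holds exactly on $\{U_{k:n}<s(c,x)\}$, where $s(c,x)$ is the smaller root, which is precisely the quantity in the statement and satisfies $s(c,x)\in(0,c)$. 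Hence $\mathbb{P}(R=k,W_n^*>x)=\mathbb{P}(R=k,\,U_{k:n}<z)$ with $z=s(k/n,x)<k/n$, so that $z\wedge k/n=z$, and it remains to identify this probability with $q_n[z,k]$.

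Next I would condition on $U_{k:n}=u\in(0,z)$. Given this value, the left block $U_{1:n},\dots,U_{k-1:n}$ and the right block $U_{k+1:n},\dots,U_{n:n}$ are independent, distributed as the order statistics of $k-1$ uniforms on $(0,u)$ and of $n-k$ uniforms on $(u,1)$. Writing $D_i<D_k$ for $i<k$ and $D_i\le D_k$ for $i>k$ and rescaling each block to $[0,1]$ turns "$k$ is the smallest maximizer" into two one-sided boundary conditions. The right condition becomes $\max_{1\le j\le n-k}\tfrac{j}{(n-k)Z_{j:n-k}}\le\tfrac{n(1-u)}{n-k}$, so by Daniels' identity \eqref{Daniels} the right factor equals $1-\tfrac{n-k}{n(1-u)}=\tfrac{k-nu}{n(1-u)}$. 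Together with the density of $U_{k:n}$ this gives
$$
\mathbb{P}(R=k,\,U_{k:n}<z)=\binom{n-1}{k-1}\int_0^z (k-nu)\,u^{k-1}(1-u)^{n-k-1}\,P_{\mathrm{left}}(u)\,du,
$$
where $P_{\mathrm{left}}(u)$ is the probability that the $k-1$ left order statistics stay above the affine lower boundary $1-\tfrac{k-i}{nu}$ of slope $1/(nu)$.

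The decisive observation is that $(k-nu)\,u^{k-1}(1-u)^{n-k-1}=\tfrac{d}{du}\big[u^{k}(1-u)^{n-k}\big]$, so the contribution of the constant $P_{\mathrm{left}}\equiv 1$ integrates exactly to $\binom{n-1}{k-1}z^{k}(1-z)^{n-k}$, the leading term of $q_n[z,k]$. The theorem thus reduces to showing that the correction $\binom{n-1}{k-1}\int_0^z (k-nu)u^{k-1}(1-u)^{n-k-1}\big(1-P_{\mathrm{left}}(u)\big)\,du$ equals the stated double sum. Here $1-P_{\mathrm{left}}(u)$ is a crossing probability for an affine lower boundary of slope $1/(nu)$ (not $1/(k-1)$), for which I would invoke a Dwass--Pyke/Smirnov-type closed form; the ballot factor $(i+1)^{i-1}$ and the Abel-type powers appearing in $q_n$ are exactly the signature of such a formula. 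Substituting this finite sum and integrating term by term against $(k-nu)u^{k-1}(1-u)^{n-k-1}$ — expanding $(1-u)^{n-k-1}$ binomially and evaluating the resulting incomplete Beta integrals at the endpoint $u=z$ — should produce the powers $(nz-i-1)^{j-i}$ and $(n-nz)^{n-j-1}$, the factor $(j-nz)$, and the binomials $\binom nj\binom ji$ with $j$ ranging from $k$ to $n$. Matching the combinatorics of the crossing formula with the Abel identities left behind after this integration is where I expect the genuine difficulty to lie; the earlier reductions are essentially bookkeeping once the quadratic and the antiderivative identity are in hand.
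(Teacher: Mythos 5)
Your reduction is correct and coincides with the paper's own: distribution-freeness (Lemma \ref{a1}) to pass to uniform order statistics, decomposition over $\{R=k\}$, and solving the quadratic $(1+x^2)u^2-(2c+x^2)u+c^2>0$ to turn $\{W_n^*>x,\,R=k\}$ into $\{U_{k:n}<s(k/n,x),\,R=k\}$ --- this is exactly the paper's first half (the paper works with the complementary event $\{X_{R:n}\ge s\}$ and uses continuity of the law of $X_{R:n}$, via Birnbaum--Pyke, to dispose of the boundary). The crucial difference is what happens next: the paper does \emph{not} derive the closed form for this joint probability at all; it simply cites Gutjahr (1988), p.~53, for the identity $\mathbb{P}(X_{R:n}\le z,\,R=k)=q_n[z,k]$, and the theorem follows at once.

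You instead attempt to prove that identity from scratch, and this is where your argument has a genuine gap. The conditional setup is sound: given $U_{k:n}=u$ the two blocks are independent, the right-block constraint reduces via Daniels' identity (\ref{Daniels}) to the factor $(k-nu)/(n(1-u))$, and the antiderivative identity $(k-nu)u^{k-1}(1-u)^{n-k-1}=\frac{d}{du}\bigl[u^{k}(1-u)^{n-k}\bigr]$ does recover the leading term $\binom{n-1}{k-1}z^{k}(1-z)^{n-k}$ of $q_n[z,k]$. But the decisive step --- showing that
$$
\binom{n-1}{k-1}\int_0^z(k-nu)\,u^{k-1}(1-u)^{n-k-1}\bigl(1-P_{\mathrm{left}}(u)\bigr)\,du
$$
equals the double sum in $q_n[z,k]$ --- is precisely the mathematical content of the explicit formula, and you leave it as a conjecture: you name the type of crossing formula you would invoke and say the integration ``should produce'' the stated powers, conceding that the matching ``is where I expect the genuine difficulty to lie.'' Nothing in the proposal establishes this. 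Note moreover that your lower boundary $1-(k-i)/(nu)$ depends on the integration variable $u$, so whatever closed form you use for $1-P_{\mathrm{left}}(u)$ must then be integrated against a $u$-dependent weight, and it is far from routine that the result collapses into Abel-type terms in $z$ alone, with $j$ running from $k$ to $n$ and the factor $(n-nz)^{n-j-1}$ coupling both blocks. So either carry out this combinatorial computation in full, or do what the paper does and quote Gutjahr's thesis; as written, the proof is incomplete at its central step.
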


\begin{proof} First notice that $W_n^*$ is distribution-free under $F=F_0$, see Lemma \ref{a1} in the appendix. Therefore we may assume that $F$ corresponds to the uniform distribution. It follows for $x>0$ that
$$
 \mathbb{P}(W_n^* \le x)=\mathbb{P}(\frac{\frac{R}{n}-X_{R:n}}{\sqrt{X_{R:n}(1-X_{R:n})}} \le x)=\sum_{k=1}^n \mathbb{P}(\frac{\frac{k}{n}-X_{R:n}}{\sqrt{X_{R:n}(1-X_{R:n})}} \le x, R=k ).
$$
Solving the inequality gives
$$
 \{\frac{\frac{k}{n}-X_{R:n}}{\sqrt{X_{R:n}(1-X_{R:n})}} \le x\}= \{X_{R:n} \ge s(n^{-1}k,x)\}
$$
and thus
$$
 \mathbb{P}(W_n^* \le x)= \sum_{k=1}^n \mathbb{P}(X_{R:n} \ge s(n^{-1}k,x),R=k)=\sum_{k=1}^n \mathbb{P}(X_{R:n} > s(n^{-1}k,x),R=k),
$$
where the last equality holds, because $X_{R:n}$ has a continuous distribution. In fact, it is uniformly distributed on$[0,1]$ by Theorem 3 of \cite{r6}. By complementation we arrive
at
$$
\mathbb{P}(W_n^* \le x)= 1- \sum_{k=1}^n \mathbb{P}(X_{R:n} \le s(n^{-1}k,x),R=k),
$$
which yields the desired result upon noticing that
$$
 \mathbb{P}(X_{R:n} \le z,R=k)= q_n[z,k]
$$
by \cite{r7}, p.53. If $x=0$, then $\mathbb{P}(W_n^* \le 0)=\sum_{k=1}^n \mathbb{P}(X_{R:n} = \frac{k}{n}, R=k)=0$. If $x<0$, then
the probability is also equal to zero, because $W_n^* \ge 0$ almost surely.
\end{proof}

Next we show that $\sqrt{n} W_n^*$ converges to the Maxwell-Boltzmann distribution as the sample size $n$ tends to infinity.

\begin{theorem} If $F=F_0$, then
$$
 H_n(x):=\mathbb{P}(\sqrt{n} W_n^* \le x) \rightarrow H(x)=2 \Phi(x)-\sqrt{\frac{2}{\pi}} x e^{-\frac{x^2}{2}} -1, \quad x \ge 0,
$$
where $\Phi$ is the distribution function of the standard normal law $N(0,1)$.
\end{theorem}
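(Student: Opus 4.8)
The plan is to reduce to the uniform case, pass to a weak limit via Donsker's theorem together with the continuity of the argmax-functional, and then identify the resulting law from the joint distribution of the maximum and its location of a Brownian bridge. First I would invoke Lemma~\ref{a1} to assume $F_0$ is the uniform distribution on $[0,1]$, so that the $X_{i:n}$ are uniform order statistics. Writing $\alpha_n(t)=\sqrt{n}\,(F_n(t)-t)$ for the uniform empirical process, we have $\sqrt{n}\,M_n=\sup_t\alpha_n(t)$, while $\tau_n=X_{R:n}$ is precisely the location at which $\alpha_n$ attains its maximum. Hence, from the definition of $W_n^*$,
$$
\sqrt{n}\,W_n^* = \frac{\sqrt{n}\,M_n}{\sqrt{\tau_n(1-\tau_n)}}=\frac{\sup_{t}\alpha_n(t)}{\sqrt{\tau_n(1-\tau_n)}},
$$
so that $\sqrt{n}\,W_n^*$ is a fixed functional of the pair (maximum, argmax) of $\alpha_n$, and everything reduces to the joint weak limit of this pair.

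Next I would apply Donsker's theorem, by which $\alpha_n\stackrel{\mathcal{D}}{\rightarrow}B$ in the Skorokhod space $D[0,1]$, where $B$ is a standard Brownian bridge. Since $B$ almost surely attains its maximum at a unique interior point, its paths lie in the set on which the joint maximum/argmax-functional is continuous (the measurability and continuity statement established in the appendix). The Continuous Mapping Theorem then yields
$$
(\sqrt{n}\,M_n,\tau_n)\stackrel{\mathcal{D}}{\rightarrow}(S,T),
$$
where $S=\max_{0\le t\le 1}B(t)$ and $T$ is the almost surely unique point at which this maximum is attained. Because $T\in(0,1)$ almost surely, the map $(s,t)\mapsto s/\sqrt{t(1-t)}$ is continuous at $(S,T)$ a.s., so a second application of the Continuous Mapping Theorem gives $\sqrt{n}\,W_n^*\stackrel{\mathcal{D}}{\rightarrow}Y:=S/\sqrt{T(1-T)}$.

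It remains to identify the law of $Y$. For this I would use the joint density of the maximum and its location of the Brownian bridge,
$$
f_{T,S}(t,s)=\sqrt{\frac{2}{\pi}}\,\frac{s^2}{(t(1-t))^{3/2}}\,\exp\Big(-\frac{s^2}{2t(1-t)}\Big),\quad 0<t<1,\ s>0,
$$
whose marginals are, as a consistency check, the uniform law of $T$ on $(0,1)$ and the known law of $S$ with density $4s\,e^{-2s^2}$. The latter is verified by the substitution $v=(2t-1)/(2\sqrt{t(1-t)})$, under which $(t(1-t))^{-3/2}\,dt=4\,dv$ and $s^2/(2t(1-t))=2s^2(1+v^2)$, reducing the marginal integral to a Gaussian one. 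Conditionally on $T=t$ the change of variables $r=s/\sqrt{t(1-t)}$ turns $f_{T,S}$ into the density $\sqrt{2/\pi}\,r^2e^{-r^2/2}$, which no longer depends on $t$; therefore $Y$ carries exactly this Maxwell-Boltzmann density. Integrating it, using $H'(x)=\sqrt{2/\pi}\,x^2e^{-x^2/2}$ and $H(0)=0$, produces the claimed $H(x)=2\Phi(x)-\sqrt{2/\pi}\,x\,e^{-x^2/2}-1$.

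The hard part will be the middle step: justifying the joint convergence of the maximum together with its location. The argmax-functional is not continuous on all of $D[0,1]$, so one must confirm that $B$ concentrates on the continuity set (continuous paths with a unique interior maximizer) and then invoke the precise measurability and continuity result of the appendix; one must also check that the ensuing division by $\sqrt{T(1-T)}$ is harmless, which follows from $T\in(0,1)$ a.s. A secondary technical point, if the joint density is not quoted from the literature, is its derivation for the Brownian bridge, for which the substitution above is the key computation.
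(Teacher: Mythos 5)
Your proposal is correct, and its probabilistic core coincides with the paper's proof: reduce to the uniform case via Lemma~\ref{a1}, realize $\sqrt{n}\,W_n^*$ as a functional of the uniform empirical process $\alpha_n$, invoke Donsker's theorem together with the appendix results (measurability of $(M,a)$ on $(D[0,1],s)$ and continuity of $a$ on $C_u$, i.e.\ Lemmas~\ref{a2}--\ref{a4}), apply the Continuous Mapping Theorem to get joint convergence of $(\sqrt{n}M_n,\tau_n)$, and then apply it a second time to the map $(s,t)\mapsto s/\sqrt{t(1-t)}$, which is legitimate because $a(B)\in(0,1)$ almost surely. The only place where you genuinely diverge is the last step: the paper simply cites Theorem~1.1 of \cite{r9} for the fact that $M(B)/\sqrt{a(B)(1-a(B))}$ is Maxwell--Boltzmann distributed, whereas you rederive this from the joint density of the bridge's argmax and maximum, $f_{T,S}(t,s)=\sqrt{2/\pi}\,s^2\,(t(1-t))^{-3/2}\exp\bigl(-s^2/(2t(1-t))\bigr)$, by rescaling $r=s/\sqrt{t(1-t)}$ conditionally on $T=t$. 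Your computations there are all correct (the marginals, the substitution $v=(2t-1)/(2\sqrt{t(1-t)})$, and the integration yielding $H$), so your route buys self-containedness at the price of one additional input: the joint density itself. Note that your marginal consistency checks do not by themselves validate that joint density (marginals never determine a joint law), so a fully rigorous version of your argument must either cite this density from the literature or derive it, e.g.\ by a time-reversal/taboo-density decomposition of the bridge at its maximum --- which is in effect reproving the result of \cite{r9} that the paper invokes. With that one citation or derivation supplied, your proof is complete and slightly more informative than the paper's, since it exhibits the independence of $T$ and $S/\sqrt{T(1-T)}$ explicitly.
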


\begin{proof} Recall that $W_n^*$ is distribution-free by Lemma \ref{a1}. The basic idea is to write $\sqrt{n} W_n^*$ as a functional of the uniform empirical process $\alpha_n(t):=\sqrt{n}(F_n(t)-t), t \in [0,1]$.
To this end let $(D[0,1],s)$ be the Shorokhod-space and for every $f \in D[0,1]$ define $M(f):=\sup_{t \in [0,1]} f(t), A(f):=\{t \in [0,1]: \max\{f(t),f(t-)\}=M(f)\}$ with the convention $f(0-):=f(0)$. By Lemma \ref{a2} $A(f)$ is a non-empty compact subset of [0,1], whence the \emph{argmax-functional} $a(f):=\min A(f)$ is well defined. Actually we should call $a$ the \emph{argsup-functional}, since in general
it gives the smallest supremizing point of $f$. One verifies easily the simple property $a(c f)=a(f)$ for every positive constant $c$. Therefore
$\tau_n=a(\alpha_n)$. Similarly, $M(c f)=c M(f)$ and thus $\sqrt{n} M_n= M(\alpha_n)$. The functional $L:=(M,a):(D[0,1],s) \rightarrow \mathbb{R}^2$ is Borel-measurable by Lemma \ref{a3} and
continuous on the subset $C_u:=\{f \in C[0,1]: f \text{ has a unique maximizing point}\} \subseteq D[0,1]$. To see this
note that $M$ is continuous (even) on $C[0,1]$ and $a$ is continuous on $C_u$ by Lemma \ref{a4}. Since by Donsker's theorem
$\alpha_n \stackrel{\mathcal{D}}{\rightarrow} B$ in $(D[0,1],s)$, where $B$ is a Brownian bridge and $B \in C_u$ almost surely, an application of the
Continuous Mapping Theorem (CMT) yields that $(\sqrt{n} M_n,\tau_n) = L(\alpha_n) \stackrel{\mathcal{D}}{\rightarrow} L(B)=(M(B),a(B))$.
Let $h:\mathbb{R}^2 \rightarrow \mathbb{R}$ be defined by $h(x,y):=x/\sqrt{y(1-y)}$ for $(x,y) \in \mathbb{R} \times (0,1)=: G$ and $h(x,y):=17$ otherwise.
Obviously $h$ is continuous on $G$. Moreover, $a(B) \in (0,1)$ almost surely. (Indeed, $a(B)$ is uniformly distributed on $[0,1]$.) Thus
another application of the CMT gives
$$
 \sqrt{n} W_n^*=\frac{\sqrt{n} M_n}{\sqrt{\tau_n(1-\tau_n)}}=h(\sqrt{n} M_n,\tau_n) \stackrel{\mathcal{D}}{\rightarrow} h(M(B),a(B))=:Z^+.
$$
Now the assertion follows from Theorem 1.1 of \cite{r9}, which says that the distribution of $Z^+$ is equal to the Maxwell-Boltzmann distribution.
\end{proof}

Since $H_n(x)=\mathbb{P}(W_n^* \le x/\sqrt{n})$ we can compute the exact distribution function $H_n$ with Theorem \ref{exact}.
Figure \ref{figure1} shows that already for a small sample size there is a fairly good approximation.
\begin{figure}
\includegraphics[scale=0.4]{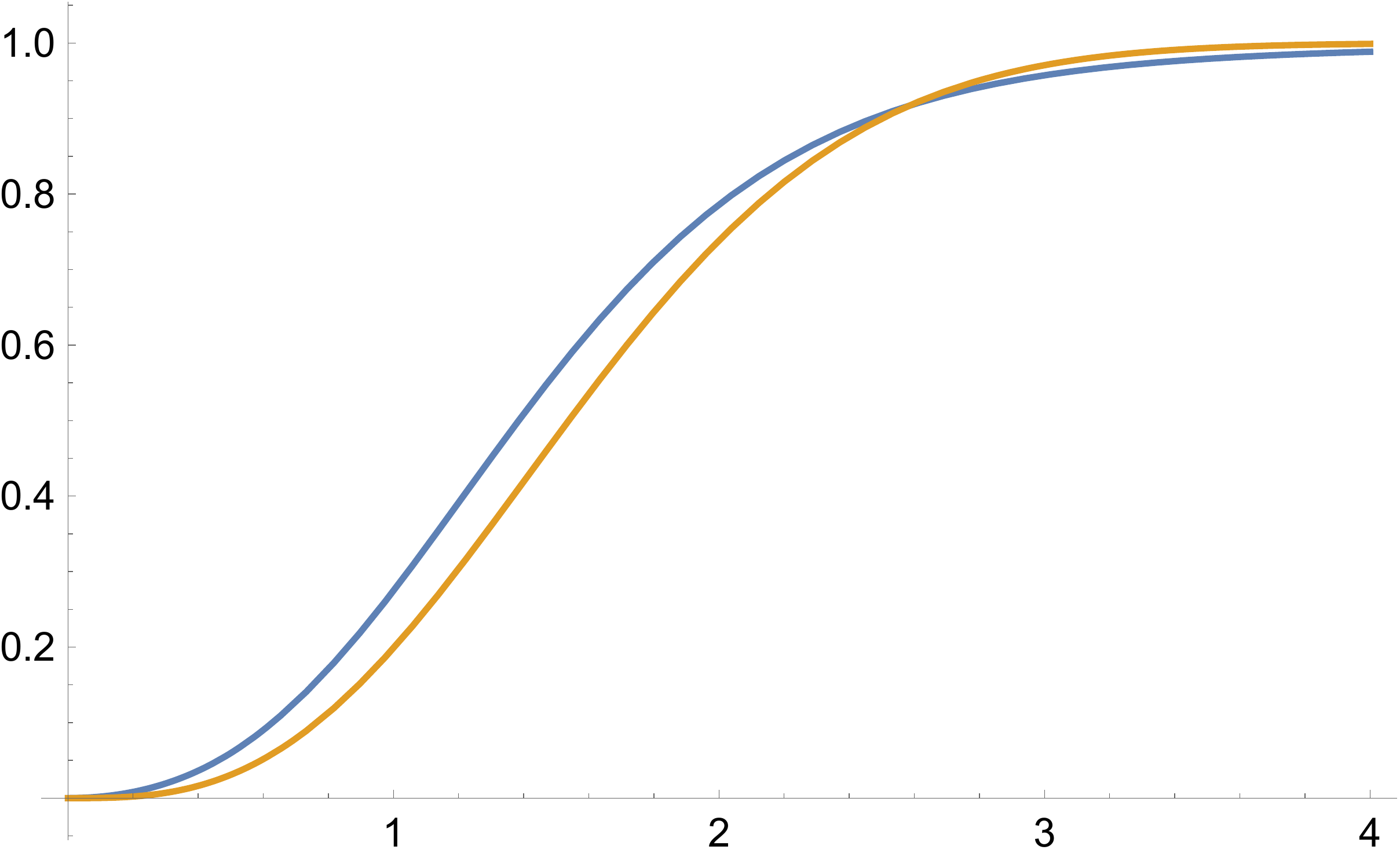}
\caption{Graphs of $H_{15}$ (blue) and $H$.}
\label{figure1}
\end{figure}

\begin{theorem} If $F=F_0$, then
$$
 \mathbb{P}(\sqrt{n}V_n^* \le x) \rightarrow G(x),
$$
where
\begin{eqnarray} \label{FR}
\lefteqn{G(x)= }\\
& & 16 \sum_{0\le j<l<\infty}(-1)^{j+l}\frac{\alpha_j\alpha_l}{\alpha_l^2-\alpha_j^2}[\frac{\Phi(\alpha_j x)-1/2}{\alpha_j}-\frac{\Phi(\alpha_l x)-1/2}{\alpha_l}]+ \hspace{4.1cm} \nonumber\\
& &  4 \sum_{0\le j < \infty} [\frac{\Phi(\alpha_j x)-1/2}{\alpha_j}-x \varphi(\alpha_j x)], \quad x \ge 0. \nonumber
\end{eqnarray}
Here, $\alpha_j = 2 j +1, j \in \mathbb{N}_0,$ and $\varphi$ denote the density of $N(0,1)$.
\end{theorem}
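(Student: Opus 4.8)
The plan is to imitate the proof of the preceding theorem, replacing the one-sided sup-functional by its two-sided analogue $f\mapsto|f|$, and then to identify the limit law by computing the joint distribution of $\sup_t|B(t)|$ and its location. By Lemma \ref{a1}, $V_n^*$ is distribution-free under $H_0$, so I may assume $F=F_0$ is the uniform law on $[0,1]$; then $F_0(\sigma_n)=\sigma_n$, and with $\alpha_n(t)=\sqrt{n}(F_n(t)-t)$ one has $\sqrt{n}K_n=\sup_{t\in[0,1]}|\alpha_n(t)|$ while $\sigma_n$ is the smallest location at which $|\alpha_n|$ attains its supremum (the convention $\max\{f(t),f(t-)\}$ in the definition of $a$ accounting for the two one-sided deviations $\tfrac{i}{n}-F_0(X_{i:n})$ and $F_0(X_{i:n})-\tfrac{i-1}{n}$ at each jump). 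Writing $\Psi(f):=|f|$, this gives $\sqrt{n}K_n=M(\Psi(\alpha_n))$, $\sigma_n=a(\Psi(\alpha_n))$, hence $\sqrt{n}V_n^*=h\big((L\circ\Psi)(\alpha_n)\big)$ with $L=(M,a)$ and $h$ as in the previous proof.

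The map $\Psi:(D[0,1],s)\to(D[0,1],s)$ is continuous, since $\big||u|-|v|\big|\le|u-v|$ shows that any time change realizing $f_n\to f$ in the Skorokhod metric also realizes $|f_n|\to|f|$. Composing with $L$, which is Borel-measurable by Lemma \ref{a3} and continuous on $C_u$ by Lemma \ref{a4}, the functional $L\circ\Psi$ is Borel-measurable and continuous at every $f$ with $|f|\in C_u$. Now $|B|\in C_u$ almost surely: $|B|$ is continuous, $\sup|B|=\max\{\sup B,-\inf B\}$ is attained at a single interior point because $\sup B\neq-\inf B$ a.s.\ and $B$ has a.s.\ unique and interior locations of its maximum and minimum (as $B(0)=B(1)=0$ and $\sup|B|>0$ a.s.). By Donsker's theorem $\alpha_n\stackrel{\mathcal{D}}{\rightarrow}B$, so two applications of the Continuous Mapping Theorem (to $L\circ\Psi$, then to $h$, continuous on $\mathbb{R}\times(0,1)$) yield
$$
\sqrt{n}V_n^*\stackrel{\mathcal{D}}{\rightarrow}h(M(|B|),a(|B|))=\frac{\sup_t|B(t)|}{\sqrt{a(|B|)(1-a(|B|))}}=:Z.
$$

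It remains to prove that $Z$ has distribution function $G$. Setting $(\mathcal{M},\mathcal{T}):=(\sup_t|B(t)|,a(|B|))$ we have $\mathbb{P}(Z\le x)=\mathbb{P}(\mathcal{M}\le x\sqrt{\mathcal{T}(1-\mathcal{T})})$, so I need the joint density of $(\mathcal{M},\mathcal{T})$. My plan is to decompose $B$ at its argmax time $t$ into two conditionally independent pieces staying in the strip $(-a,a)$ — a bridge from $(0,0)$ to $(t,\pm a)$ and a bridge from $(t,\pm a)$ to $(1,0)$ — and to represent the corresponding propagators of Brownian motion absorbed at $\pm a$ by the method of images. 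Because both pieces start or end at the midpoint $0$ and touch the barrier $\pm a$, the image sums collapse onto the odd integers $\alpha_j=2j+1$ with alternating signs, and the density appears as a double sum over the indices $(j,l)$ of the two pieces; the symmetrization over the barriers $\pm a$ together with the normalizations of the absorbed-bridge densities produces the constants $16$ and $4$.

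Finally I integrate this joint density over $\{a\le x\sqrt{t(1-t)}\}$. The image representation gives the two pieces Gaussian exponents $\exp(-\alpha_j^2 a^2/(2t))$ and $\exp(-\alpha_l^2 a^2/(2(1-t)))$; on the diagonal $j=l$ these combine to $\exp(-\alpha_j^2 a^2/(2t(1-t)))$, so the substitution $a=u\sqrt{t(1-t)}$ turns the inner integral into $\int_0^x\varphi(\alpha_j u)\,du=(\Phi(\alpha_j x)-\tfrac12)/\alpha_j$, yielding the diagonal series of $G$ (with the $x\varphi(\alpha_j x)$ term arising from the boundary of the region). For $j\neq l$ the remaining integration over $t\in(0,1)$, after the same substitution and a partial-fraction decomposition in $\alpha_j^2-\alpha_l^2$, produces the off-diagonal coefficients $\alpha_j\alpha_l/(\alpha_l^2-\alpha_j^2)$ and the bracket $(\Phi(\alpha_j x)-\tfrac12)/\alpha_j-(\Phi(\alpha_l x)-\tfrac12)/\alpha_l$. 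I expect this last stage — obtaining the joint density of $(\mathcal{M},\mathcal{T})$ in closed form and rigorously justifying the term-by-term integration and re-summation of the doubly-indexed series — to be the main obstacle; the functional and CMT steps are routine given Lemmas \ref{a1}, \ref{a3} and \ref{a4}.
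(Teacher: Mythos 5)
Your reduction via the Continuous Mapping Theorem is exactly the paper's argument: the paper likewise writes $\sqrt{n}V_n^* = h\circ L(|\alpha_n|)$ (with a typo, $W_n^*$ in place of $V_n^*$), notes $|\alpha_n|\stackrel{\mathcal{D}}{\rightarrow}|B|$ in $(D[0,1],s)$, and concludes $\sqrt{n}V_n^*\stackrel{\mathcal{D}}{\rightarrow}h(M(|B|),a(|B|))=:Z$. Your treatment of this step is, if anything, more careful than the paper's: you verify continuity of $f\mapsto|f|$ on $(D[0,1],s)$ and argue why $|B|\in C_u$ almost surely (a.s.\ uniqueness and interiority of the maximizing point of $|B|$, via $\sup B\neq -\inf B$ a.s.), points the paper passes over in silence.

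The genuine gap is in the second half. The paper finishes in one line by citing Theorem 1.1 of \cite{r9}, which is precisely the statement that $Z=\sup_t|B(t)|/\sqrt{a(|B|)(1-a(|B|))}$ has distribution function $G$; the same citation closes the one-sided theorem (Maxwell--Boltzmann law). You instead propose to re-derive this identity by decomposing $B$ at the argmax of $|B|$ and using the method of images for Brownian motion absorbed at $\pm a$. That is a plausible strategy, and very likely the kind of computation underlying \cite{r9}, but as written it is a plan rather than a proof: you never actually obtain the joint density of $(\sup_t|B(t)|,a(|B|))$ in closed form; the conditional independence of the two bridge pieces given the argmax and the level requires justification (the argmax is not a stopping time, so this is a nontrivial path-decomposition argument); the constants $16$ and $4$ and the collapse of the image sums onto the odd integers $\alpha_j=2j+1$ are asserted, not computed; and the term-by-term integration and re-summation of the doubly indexed series over $\{a\le x\sqrt{t(1-t)}\}$ is left unjustified. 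You acknowledge this yourself (``the main obstacle''). Since everything in the theorem beyond the routine CMT step \emph{is} the explicit formula $G$, the proof is incomplete exactly where it matters: either carry out the joint-density computation and the integration in full, or invoke Theorem 1.1 of \cite{r9} as the paper does.
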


\begin{proof} Observe that $\sqrt{n} W_n^* = h \circ L(|\alpha_n|)$ and $|\alpha_n| \stackrel{\mathcal{D}}{\rightarrow} |B|$ in
$(D[0,1],s)$. Thus the CMT guarantees that
$$
 \sqrt{n} W_n^* \stackrel{\mathcal{D}}{\rightarrow} h(M(|B|),a(|B|))=:Z
$$
and the assertion follows from Theorem 1.1 of \cite{r9}.
\end{proof}

\section{Power investigations}
Recall that $H_n(x):=\mathbb{P}(\sqrt{n} W^*_n \le x)$ is given by Theorem \ref{exact}. Similarly, for $I_n(x):=\mathbb{P}(\sqrt{n} M_n(x) \le x)$
there is an explicit expression according to (\ref{Smir}). Finally, $J_n(x):=\mathbb{P}(T^+_n(w) \le x)$ can be computed via (\ref{Tn+})-(\ref{rec}).
For a given level $\alpha$ of significance let $z_{\alpha,n}, u_{\alpha,n}$ and $y_{\alpha,n}$ be the exact critical values of our test, the Smirnov-test and the MS-test, respectively. Thus these values are determined through
$H_n(z_{\alpha,n})=I_n(u_{\alpha,n})=J_n(y_{\alpha,n})=1-\alpha$. For $\alpha=0.05$ we provide a table of the critical values for some selected
sample sizes $n$. As to $T^+_n(w)$ recall that by (\ref{w}) the weight $w=w_\alpha=0.024205$.
\begin{table*}
\caption{Exact critical values for $\alpha=0.05$}
\label{crit}
\begin{tabular}{c|c|c|c}
\vspace{0.2cm}
$n$ & $z_{\alpha,n}$ & $u_{\alpha,n}$ & $y_{\alpha,n}$\\ \hline
 30 & 2.83457 & 1.19214 & 1.27950\\
 50 & 2.81185 & 1.20014 & 1.28827\\
100 & 2.79586 & 1.20856 & 1.29575\\
500 & 2.78631 & 1.21612 & 1.30498\\
1.000 & 2.78484 & 1.21869 & 1.30680\\
10.000& 2.79339 & 1.22238 & 1.31094\\
$\infty$ & 2.79548 & 1.22387 & 1.42782\\
\hline
\end{tabular}
\end{table*}
Table \ref{crit} shows that the asymptotic critical values of our new test (N-test) and the S-Test are fairly good even for small sample sizes, whereas the asymptotic value of the MS-Test is significantly larger than the exact values even for very large $n$. This indicates that the speed of convergence in (\ref{MS2}) seems to be rather slow.

Next we present the results of a small simulation study. Here we choose $F_0(x)=x$ on $[0,1]$ and $F=F_{\tau,\delta}$ as in
Example \ref{exp1}, that means the alternative $F$ on $[0,1]$ is the simple polygonal line through the points
$(0,0), (\tau,\delta \tau))$ and $(1,1)$. We fix $\tau=0.05$ and shortly write $F_\delta:=F_{0.05,\delta}$.
For $\delta \in [1,1/\tau)=[1,20)$ let
\begin{align*}
\beta^{(N)}_n(\delta)&:=\mathbb{P}_\delta(\sqrt{n}W^*_n > z_{\alpha,n}),\\
\beta^{(S)}_n(\delta)&:=\mathbb{P}_\delta(\sqrt{n}M_n > u_{\alpha,n}),\\
\beta^{(MS)}_n(\delta)&:=\mathbb{P}_\delta(T^+_n(w) > y_{\alpha,n})
\end{align*}
be the power-functions of the N-, S- and MS-test. (Under $\mathbb{P}_\delta$ the data $X_1,\ldots,X_n$ are i.i.d. with distribution function $F_\delta$). A Monte-Carlo simulation with $10^4$ replicates per grid-point yields the following results as displayed in Figure \ref{p30}-\ref{p500}. Here, the power functions $\beta^{(N)}_n, \beta^{(S)}_n$ or
$\beta^{(MS)}_n$ are represented by the blue, orange or green line, respectively. We see that for small (n=30), middle (n=100) and large (n=500) sample sizes the $N$-test with a clear distance is uniformly better than the $S$-test, which in turn is uniformly better than the $MS$-test.
The latter may come as a surprise, but it may be because of that the weighting by $1/F_0(x)$ and $1/(1-F_0(x))$ is unduly.

\begin{figure}
\includegraphics[scale=0.6]{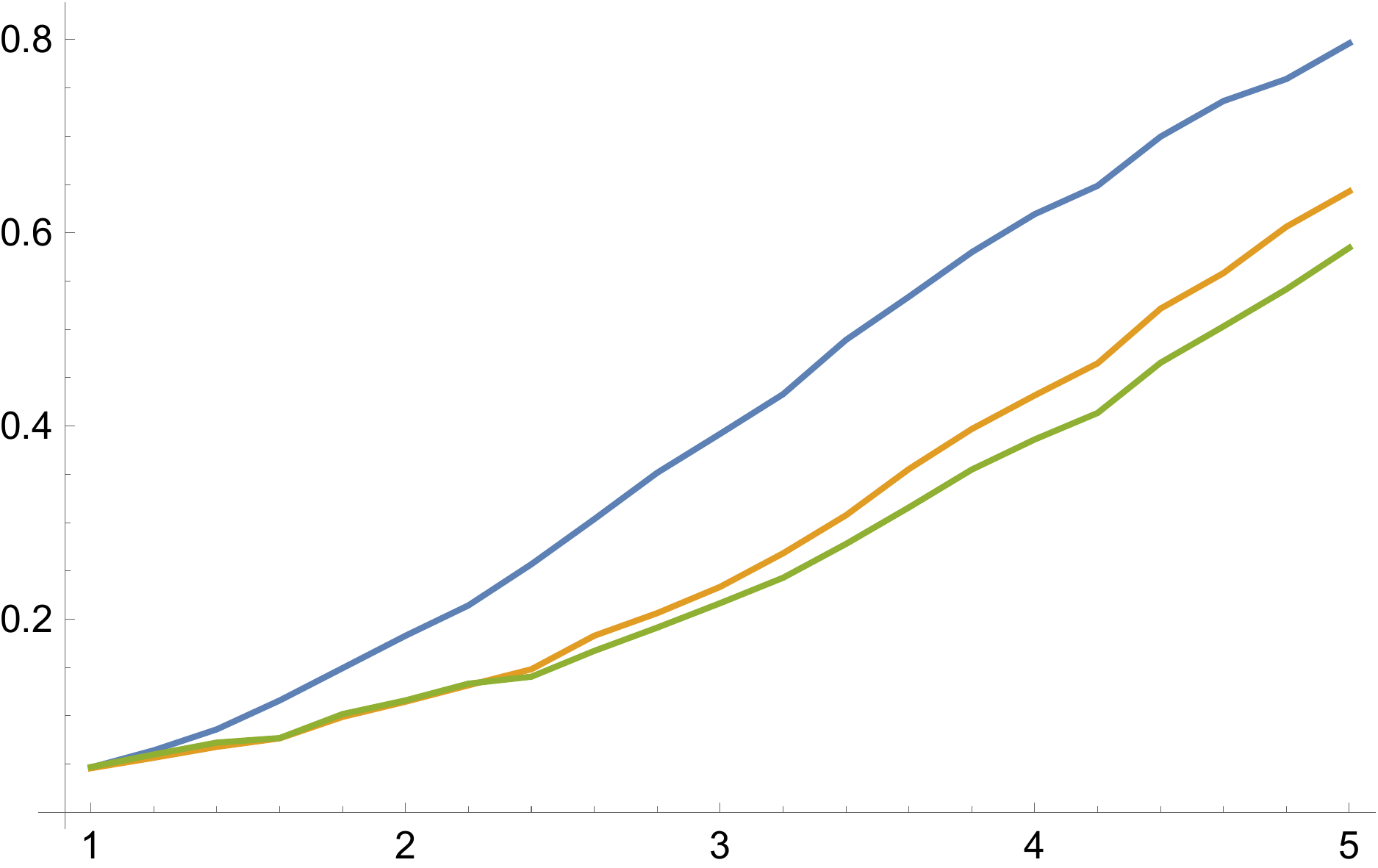}
\caption{Power for $n=30$}
\label{p30}
\end{figure}

\begin{figure}
\includegraphics[scale=0.6]{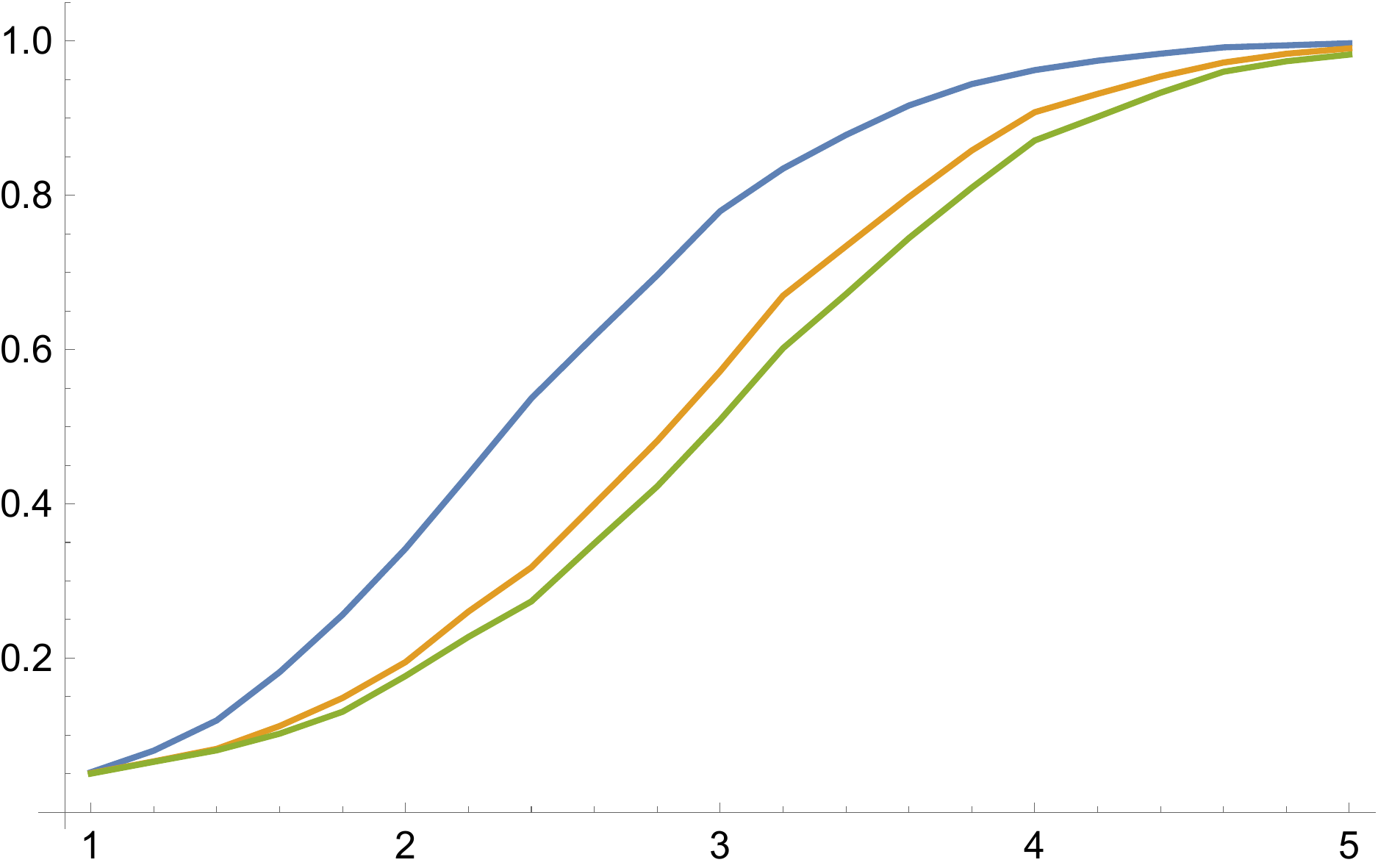}
\caption{Power for $n=100$}
\label{p100}
\end{figure}

\begin{figure}
\includegraphics[scale=0.6]{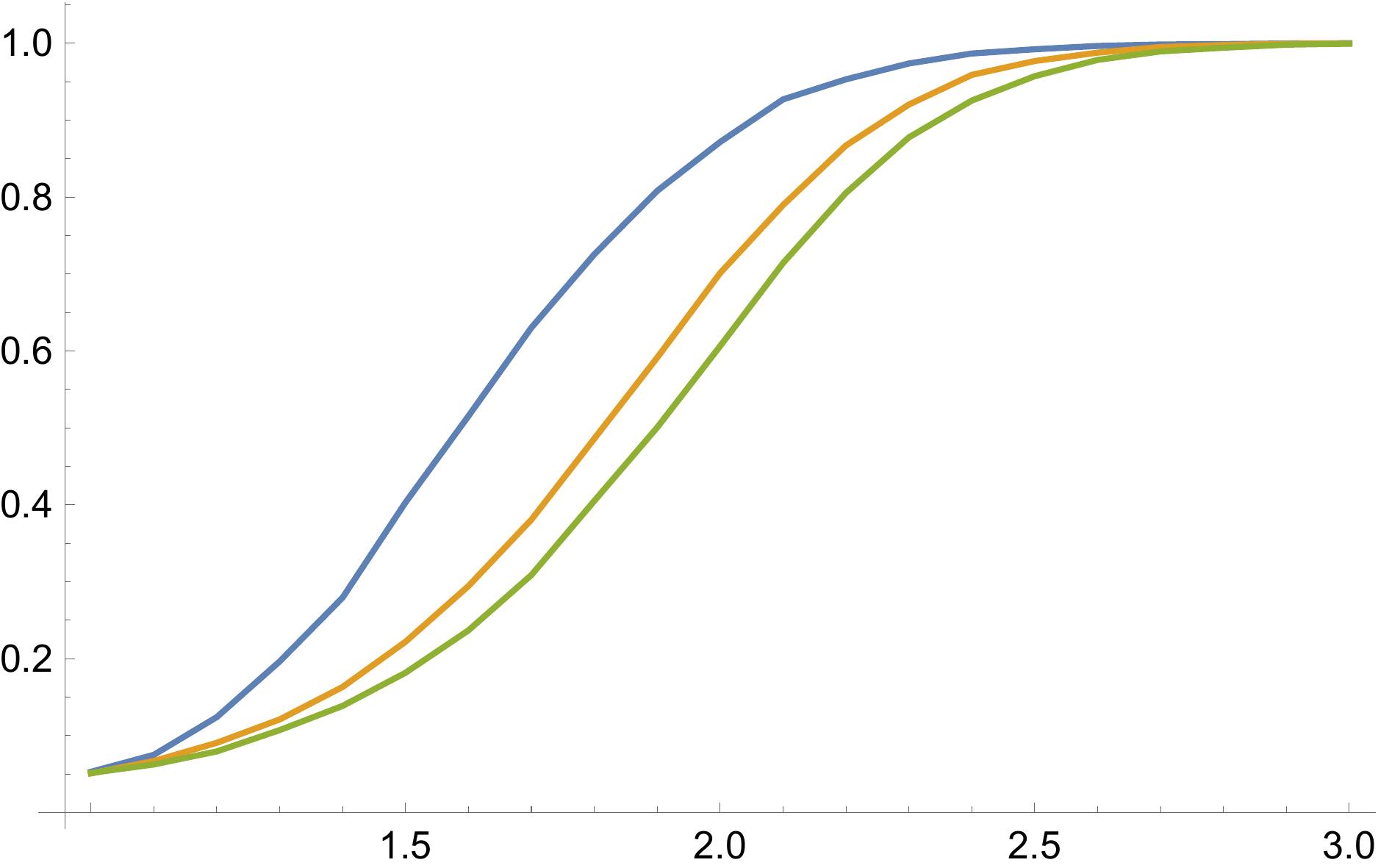}
\caption{Power for $n=500$}
\label{p500}
\end{figure}

In practical applications the statistician computes the $p$-value. For a given realization of the test-statistics $\sqrt{n} W^*_n, \sqrt{n} M_n$ and $T^+_n(w)$ the corresponding $p$-values are the (random) quantities $1-H_n(\sqrt{n} W^*_n), 1-I_n(\sqrt{n} M_n)$ and $1-J_n(T^+_n(w))$.

\begin{appendix}
\section*{Appendix}\label{appn}
If $F=F_0$, then our statistics have the shape

\begin{align} 
 W_n^* &=\frac{\frac{R}{n}-F(X_{R:n})}{\sqrt{F(X_{R:n})(1-F(X_{R:n}))}} \quad \text{ with } \label{wn}\\
       R&=\aargmax \frac{i}{n}-F(X_{i:n}) \label{R}
\end{align}
and
\begin{align} 
  V_n^*&=\frac{\max\{\frac{r}{n}-F(X_{r:n}),F(X_{r:n})-\frac{r-1}{n}\}}{\sqrt{F(X_{r:n})(1-F(X_{r:n}))}} \quad \text{ with } \label{Vn}\\
     r &= \aargmax \max\{\frac{i}{n}-F(X_{i:n}),F(X_{i:n})-\frac{i-1}{n}\}. \label{r}
\end{align}

\begin{lemma} \label{a1} If $F$ is continuous, then the distribution of $W_n^*$ in (\ref{wn}) does not depend on $F$.
The same holds for $V_n^*$ in (\ref{Vn}).
\end{lemma}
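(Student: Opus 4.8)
The plan is to invoke the probability integral (quantile) transform. Since $F$ is continuous, the random variables $U_i := F(X_i)$, $1 \le i \le n$, are i.i.d.\ and uniformly distributed on $(0,1)$, and their joint law does not depend on $F$. The entire argument then consists in rewriting $W_n^*$ and $V_n^*$ as one and the same deterministic functional of the uniform order statistics $U_{1:n} \le \cdots \le U_{n:n}$, so that their distributions cannot involve $F$.

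First I would observe that, because $F$ is non-decreasing, it preserves the ordering of the sample, so that
$$
 F(X_{1:n}) \le F(X_{2:n}) \le \cdots \le F(X_{n:n}).
$$
As multisets, $\{F(X_1),\ldots,F(X_n)\}$ coincides with $\{U_1,\ldots,U_n\}$, and therefore the already sorted left-hand side must be precisely the order statistics of the $U_i$; that is, $F(X_{i:n}) = U_{i:n}$ for every $1 \le i \le n$. (Continuity of $F$ also ensures that, almost surely, the $X_i$ are pairwise distinct, so the order statistics are unambiguously defined.)

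Substituting this identity into (\ref{wn}) and (\ref{R}) yields
$$
 R = \aargmax \Big(\frac{i}{n}-U_{i:n}\Big), \qquad
 W_n^* = \frac{\frac{R}{n}-U_{R:n}}{\sqrt{U_{R:n}(1-U_{R:n})}},
$$
so $W_n^*$ is an explicit measurable function of $(U_{1:n},\ldots,U_{n:n})$ alone. Since the joint distribution of these uniform order statistics does not involve $F$, neither does the law of $W_n^*$, which is the assertion. The claim for $V_n^*$ follows verbatim: replacing each $F(X_{i:n})$ by $U_{i:n}$ in (\ref{Vn}) and (\ref{r}) expresses both $r$ and $V_n^*$ as the same functionals of $(U_{1:n},\ldots,U_{n:n})$, whose law is again $F$-free.

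I expect no genuine obstacle here. The two points deserving a line of care are the order-preservation step $F(X_{i:n})=U_{i:n}$, which rests solely on the monotonicity of $F$, and the appeal to the probability integral transform, which requires exactly the continuity of $F$ hypothesized in the lemma; without continuity the $U_i$ need not be uniform and the reduction breaks down.
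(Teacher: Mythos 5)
Your proof is correct and takes essentially the same route as the paper's: both reduce $W_n^*$ and $V_n^*$ to explicit functionals of the uniform order statistics via the probability integral/quantile transformation, the key identity being $F(X_{i:n})=U_{i:n}$. The only cosmetic difference is direction --- the paper writes $X_i=F^{-1}(U_i)$ and uses that $F\circ F^{-1}$ is the identity for continuous $F$, whereas you set $U_i:=F(X_i)$ and argue via monotonicity of $F$; the substance is identical.
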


\begin{proof} By the quantile-transformation we can w.l.o.g. assume that $X_i=F^{-1}(U_i), 1 \le i \le n$.
By continuity of $F$ it follows that $X_{i:n}=F^{-1}(U_{i:n})$ for all indices $i$ and in particular
$X_{R:n}=F^{-1}(U_{R:n})$. As in the proof of Lemma A.2 in \cite{r8} one shows that $R=\aargmax \frac{i}{n}-U_{i:n}$.
Since by continuity $F \circ F^{-1}$ is the identity map we obtain from (\ref{wn}) and (\ref{R}) that
$$
 W_n^*=\frac{\frac{R}{n}-U_{R:n}}{U_{R:n}(1-U_{R:n})} \quad \text{ with } \quad R=\aargmax \frac{i}{n}-U_{i:n}.
$$
In the same manner one shows that
\begin{align} 
  V_n^*&=\frac{\max\{\frac{r}{n}-U_{r:n},U_{r:n}-\frac{r-1}{n}\}}{\sqrt{U_{r:n})(1-U_{r:n})}} \quad \text{ with } \label{vn}\\
     r &= \aargmax \max\{\frac{i}{n}-U_{i:n},U_{i:n})-\frac{i-1}{n}\} \label{r}
\end{align}
and the proof is complete.
\end{proof}

\begin{lemma} \label{a2} For every $f \in D[0,1]$ let $M(f):=\sup_{t \in [0,1]} f(t)$ and $A(f):=\{t \in [0,1]: \max\{f(t),f(t-)\}=M(f)\}$ with the convention $f(0-):=f(0)$. Then $A(f)$ is non-empty and compact. In particular, $a(f):=\min A(f)$ is well-defined. The statements remain true if
$[0,1]$ is replaced by any compact subinterval.
\end{lemma}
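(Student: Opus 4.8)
The plan is to reduce both assertions—non-emptiness and compactness—to the single structural fact that the function $g(t):=\max\{f(t),f(t-)\}$ is upper semicontinuous on $[0,1]$. Granting this, the well-definedness of $a(f)=\min A(f)$ becomes immediate, since a non-empty compact subset of $\mathbb{R}$ attains its infimum.

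First I would record the preliminaries. Every $f\in D[0,1]$ is bounded (a standard property of c\`adl\`ag functions on a compact interval), so $M=M(f)<\infty$ and the definitions are meaningful. Since each $f(s)\le M$, every left limit satisfies $f(t-)=\lim_{s\uparrow t}f(s)\le M$, and together with $f(t)\le M$ this yields $g\le M$ throughout; as $g\ge f$ pointwise we also get $\sup_t g(t)=M$. Hence $A(f)=\{t:g(t)=M\}=\{t:g(t)\ge M\}$ is precisely the top level set of $g$.

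The heart of the argument is the upper semicontinuity of $g$, and I expect this to be the main obstacle. The key observation is that right-continuity at $t_0$ is not merely a statement about a sequential limit but furnishes, for every $\varepsilon>0$, a one-sided neighborhood $(t_0,t_0+\delta)$ on which $f(s)<f(t_0)+\varepsilon$; symmetrically, the existence of the left limit gives a neighborhood $(t_0-\delta',t_0)$ on which $f(s)<f(t_0-)+\varepsilon$. Writing $c:=g(t_0)$, these \emph{uniform} one-sided bounds control not only $f(t)$ but also the left-limit term $f(t-)=\lim_{s\uparrow t}f(s)$ for every nearby $t$, because for such $t$ the approximating values $f(s)$ already lie in the controlled neighborhood and so inherit the bound $c+\varepsilon$ in the limit. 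Combining the two sides gives $g(t)\le c+\varepsilon$ for all $t$ near $t_0$, which is exactly $\limsup_{t\to t_0}g(t)\le g(t_0)$; at an endpoint one uses the convention $f(0-):=f(0)$ and retains only the relevant one-sided neighborhood. The delicate point is precisely this transfer of control from the values $f(s)$ to the left limits $f(t-)$ of \emph{neighboring} points, since the left-limit term is the one that is easily mishandled.

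With upper semicontinuity in hand the conclusion is routine. A real-valued upper semicontinuous function on a compact set attains its supremum, so $A(f)=\{g=M\}$ is non-empty; and the top level set $\{g\ge M\}$ of an upper semicontinuous function is closed, whence $A(f)$ is a closed subset of the compact interval $[0,1]$ and therefore compact. Consequently $a(f)=\min A(f)$ is well-defined. Finally, the argument invokes only that $f$ is c\`adl\`ag and that the domain is a compact interval, so it carries over verbatim to any compact subinterval, with the analogous convention that the left limit at the left endpoint equals the function value there.
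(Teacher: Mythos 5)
Your proof is correct and follows essentially the same route as the paper: both reduce the lemma to the fact that $t \mapsto \max\{f(t),f(t-)\}$ is the upper semicontinuous regularization of $f$, so that $A(f)$ is the set of maximizers of an u.s.c. function on a compact interval, hence non-empty and compact. The only difference is that the paper obtains this key fact by citing Lemmas 2.1 and 2.2 of \cite{r24}, whereas you prove it inline; your direct argument --- in particular the careful transfer of the one-sided bounds from values $f(s)$ to the left limits $f(t-)$ of neighboring points --- is sound.
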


\begin{proof} Introduce $\bar{f}(x):=\max\{f(x),f(x-)\}$. Then by Lemmas 2.1 and 2.2 of \cite{r24} the function $\bar{f}$ is the upper semicontinuous regularization of $f$ and
$A(f)$ is equal to the set of all maximizing points of $\bar{f}$. Since $[0,1]$ is compact the latter set set is known
to be non-empty and compact.
\end{proof}

\begin{lemma} \label{a3} The functional $L:=(M,a):(D[0,1],s) \rightarrow \mathbb{R}^2$ is Borel-measurable.
\end{lemma}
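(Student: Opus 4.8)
The plan is to treat the two coordinates of $L=(M,a)$ separately. The supremum functional $M$ is unproblematic: since the supremum of a function over $[0,1]$ is invariant under the increasing time-changes used in the definition of the Skorokhod metric $s$, one checks that $M(f_n)\to M(f)$ whenever $f_n\to f$, so $M$ is in fact continuous on $(D[0,1],s)$ and in particular Borel-measurable. All the work therefore goes into the argsup-functional $a$. To prove that $a$ is measurable it suffices to show that the sublevel set $\{f\in D[0,1]: a(f)\le c\}$ is Borel for every $c\in[0,1]$, since these sets determine $a$ as a measurable map into $[0,1]$.

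The key step is to represent each such sublevel set through partial-supremum functionals. For $c\in[0,1]$ put $M_c(f):=\sup_{t\in[0,c]}f(t)$, so that $M=M_1$ and $M_c\le M$ always. I claim that
\[
 \{a\le c\}=\{f\in D[0,1]: M_c(f)=M(f)\}.
\]
The justification rests on Lemma~\ref{a2}: with $\bar f(t)=\max\{f(t),f(t-)\}$ the upper semicontinuous regularization of $f$, the set $A(f)$ equals the set of maximizing points of $\bar f$ and $a(f)=\min A(f)$. Since $\bar f\le M(f)$ everywhere, one first verifies $\sup_{t\in[0,c]}\bar f(t)=\sup_{t\in[0,c]}f(t)=M_c(f)$, using that both $f(t)$ and the left limit $f(t-)$ are bounded by $M_c(f)$ for $t\le c$. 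If $a(f)\le c$, then the minimizer $t_0=a(f)\in A(f)$ lies in $[0,c]$ with $\bar f(t_0)=M(f)$, and approximating the left limit where necessary gives $M_c(f)\ge M(f)$, hence $M_c(f)=M(f)$. Conversely, if $M_c(f)=M(f)$, then $\sup_{[0,c]}\bar f=M(f)$; because $\bar f$ is upper semicontinuous and $[0,c]$ is compact, this supremum is attained at some $t^{\ast}\in[0,c]$, so $t^{\ast}\in A(f)\cap[0,c]$ and therefore $a(f)\le t^{\ast}\le c$. It is essential here that the interval be closed: over the half-open interval $[0,c)$ attainment could fail exactly when the sole maximizer equals $c$.

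It then remains to argue that $M_c$ is Borel-measurable for every $c$, so that $\{M_c=M\}=\{M-M_c=0\}$ is Borel. Here I would invoke the classical fact that the Borel $\sigma$-algebra of $(D[0,1],s)$ is generated by the coordinate projections $\pi_t\colon f\mapsto f(t)$, $t\in[0,1]$; in particular each $\pi_t$ is measurable and countable suprema of projections are measurable. By right-continuity of càdlàg functions the supremum over $[0,c]$ reduces to a countable one,
\[
 M_c(f)=\sup_{t\in([0,c]\cap\mathbb{Q})\cup\{c\}}f(t),
\]
so that $M_c$ is a countable supremum of the measurable maps $\pi_t$ and hence measurable. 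Consequently each $\{a\le c\}$ is Borel, so $a$ is Borel-measurable, and since $M$ is measurable as well, $L=(M,a)$ is Borel-measurable.

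The main obstacle is the set identity of the second paragraph. Everything hinges on correctly passing from $f$ to its upper semicontinuous regularization $\bar f$ and on the attainment of the supremum of $\bar f$ on the \emph{closed} interval $[0,c]$, which is precisely what Lemma~\ref{a2} supplies; the boundary behaviour (the role of the left limit at $t=c$ and the distinction between $[0,c)$ and $[0,c]$) is the only place where genuine care is required. The measurability of $M_c$, once the fact that the projections generate the Borel structure is quoted, is then routine.
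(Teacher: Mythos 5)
Your proof is correct and follows essentially the same route as the paper: both reduce measurability of $a$ to the sublevel sets $\{a\le c\}$, characterize these by a comparison of partial suprema (your identity $\{M_c=M\}$ is equivalent to the paper's $\{\sup_{[0,c]}f\ge\sup_{[c,1]}f\}$), invoke Lemma~\ref{a2} for attainment of the supremum of the upper semicontinuous regularization on the compact interval, and conclude from the fact that the coordinate projections generate the Borel $\sigma$-algebra of $(D[0,1],s)$. One detail is even handled more carefully in your version: taking the supremum over $([0,c]\cap\mathbb{Q})\cup\{c\}$ rather than over $[0,c]\cap\mathbb{Q}$ alone correctly accounts for irrational $c$, where $f(c)$ need not be recovered from rational points of $[0,c]$ by right-continuity --- a point the paper's reduction to rational suprema glosses over (though its argument is easily repaired by restricting to rational levels $c$).
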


\begin{proof} Since the Borel-$\sigma$ algebra $\mathcal{B}(\mathbb{R}^2)$ is equal to $\mathcal{B}(\mathbb{R})\otimes \mathcal{B}(\mathbb{R})$ it
suffices to show that $M$ and $a$ are Borel-measurable. By right-continuity of $f \in D[0,1]$ we have that $M(f)=\sup_{t \in [0,1] \cap \mathbb{Q}}f(t)$ and
therefore $M:(D[0,1],s)\rightarrow \mathbb{R}$ is Borel-measurable upon noticing that the Borel-$\sigma$ algebra on $(D[0,1],s)$ is generated by the
projections (evaluation maps), see Theorem 12.5 in \cite{r3}.

As to measurability of $a$ let $x \in [0,1]$. Then (with $D:=D[0,1])$ we have that
\begin{align}
\{f \in D: a(f)\le x\}& =\{f \in D:\sup_{t \in [0,x]} f(t) \ge \sup_{t \in [x,1]} f(t)\}  \label{seteq}\\
                       & =\{f \in D:\sup_{t \in [0,x]\cap \mathbb{Q}} f(t) \ge \sup_{t \in [x,1] \cap \mathbb{Q}} f(t)\}.\label{seteq2}
\end{align}
To see the equality (\ref{seteq}) assume that $a(f)\le x$, but $\sup_{t \in [0,x]} f(t) < \sup_{t \in [x,1]} f(t)$. Put $\tau:=a(f)$.
Then $f(\tau) \le \sup_{t \in [0,x]} f(t) < \sup_{t \in [x,1]} f(t) \le M(f)$. If $\tau >0$ we find a sequence $(s_k)$ with
$0 <s_k \uparrow \tau$, whence $f(\tau-) = \lim_{k \rightarrow \infty}f(s_k)$. In particular $s_k \le \tau \le x$ and so
$f(s_k) \le \sup_{t \in [0,x]} f(t)$ for each $k$. Taking the limit $k \rightarrow \infty$ yields $f(\tau-) \le \sup_{t \in [0,x]} f(t) < \sup_{t \in [x,1]} f(t) \le M(f)$. As a consequence $\max\{f(\tau),f(\tau-)\} < M(f)$, a contradiction to $\tau$ is a supremizing point. If $\tau=0$, then by convention $f(0-)=f(0)$ and $f(0)<M(f)$ as seen above. So we arrive at a contradiction also in this case.

For the other direction observe that $M(f)=\max\{\sup_{t \in [0,x]} f(t),\sup_{t \in [x,1]} f(t)\}=\sup_{t \in [0,x]} f(t)= \max\{f(\sigma), f(\sigma-)\}$ for some $\sigma \in [0,x]$ by Lemma \ref{a2}. Thus $\sigma \in A(f)$ and since $a(f)$ is the smallest supremizing point it follows that $a(f) \le \sigma \le x$ as desired. This shows equality (\ref{seteq}). The second equality (\ref{seteq2}) holds, because the respective suprema
coincide by right-continuity of $f$. Measurability now follows again by noticing that the Borel-$\sigma$ algebra on $(D[0,1],s)$ is generated by the
projections.
\end{proof}

\begin{lemma} \label{a4} The argmax-functional $a:(D[0,1],s) \rightarrow [0,1]$ is continuous on the class $C_u$ of all continuous functions
$f:[0,1] \rightarrow \mathbb{R}$ with unique maximizing point.
\end{lemma}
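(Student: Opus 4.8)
The plan is to combine the standard fact that Skorokhod convergence to a \emph{continuous} limit is automatically uniform with the classical subsequence-and-compactness argument for the argmax functional. Fix $f \in C_u$ and let $\tau := a(f)$ denote its unique maximizing point; since $f$ is continuous we have $f(t-)=f(t)$ for all $t$, so $A(f)=\{t:f(t)=M(f)\}=\{\tau\}$ and indeed $a(f)=\tau$. Now let $(f_n) \subseteq D[0,1]$ be an arbitrary sequence with $f_n \to f$ in $(D[0,1],s)$. First I would invoke the well-known property of the Skorokhod topology (see \cite{r3}) that, because the limit $f$ is continuous, $f_n \to f$ in $(D[0,1],s)$ forces $\|f_n-f\|_\infty \to 0$. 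Since the supremum functional is $1$-Lipschitz with respect to $\|\cdot\|_\infty$ and $M$ is unchanged by passing to the upper-semicontinuous regularization, this immediately gives $M(f_n) \to M(f)$.

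Next, writing $\tau_n := a(f_n)$, I would show that every convergent subsequence of $(\tau_n)$ has limit $\tau$; since $[0,1]$ is compact this yields $\tau_n \to \tau$, which is the assertion. So suppose $\tau_{n_k} \to t^\ast$. By definition of $a$ we have $\max\{f_{n_k}(\tau_{n_k}),f_{n_k}(\tau_{n_k}-)\}=M(f_{n_k})$ for every $k$, and hence along a further subsequence one of the two alternatives holds for all $k$. The core step is to deduce $f(t^\ast)=M(f)$ in either case, for then uniqueness of the maximizer forces $t^\ast=\tau$. If $f_{n_k}(\tau_{n_k})=M(f_{n_k})$, then $|f_{n_k}(\tau_{n_k})-f(\tau_{n_k})|\le\|f_{n_k}-f\|_\infty\to 0$ while $f(\tau_{n_k})\to f(t^\ast)$ by continuity of $f$, so $M(f_{n_k})\to f(t^\ast)$; comparing with $M(f_{n_k})\to M(f)$ gives $f(t^\ast)=M(f)$. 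If instead $f_{n_k}(\tau_{n_k}-)=M(f_{n_k})$ with $\tau_{n_k}>0$ (the case $\tau_{n_k}=0$ reducing to the previous one via the convention $f(0-)=f(0)$), I would pick points $s_k<\tau_{n_k}$ with $|s_k-\tau_{n_k}|<1/k$ and $|f_{n_k}(s_k)-f_{n_k}(\tau_{n_k}-)|<1/k$; then $s_k\to t^\ast$, and the same uniform-convergence-plus-continuity estimate yields both $f_{n_k}(s_k)\to f(t^\ast)$ and $f_{n_k}(s_k)\to M(f)$, whence again $f(t^\ast)=M(f)$.

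I expect the left-limit alternative to be the only delicate point: one must approximate the left limit $f_{n_k}(\tau_{n_k}-)$ by genuine values $f_{n_k}(s_k)$ at nearby arguments and verify that these arguments still converge to $t^\ast$, so that continuity of $f$ may be applied and the limit identified with $M(f)$. Everything else—the reduction to uniform convergence, the Lipschitz bound for $M$, and the compactness packaging of the subsequences—is routine once this approximation is in place.
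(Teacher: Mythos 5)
Your proof is correct, but it follows a genuinely different route from the paper's. The paper argues directly and quantitatively: for each $\epsilon$ it forms the gap $\delta_\epsilon=\tfrac13\bigl(f(\tau)-\sup_{t\notin U_\epsilon}f(t)\bigr)>0$ (positive by uniqueness of $\tau$), and shows via uniform convergence that eventually $\max\{f_n(t),f_n(t-)\}<f_n(\tau)$ for all $t$ outside the neighborhood $U_\epsilon$ of $\tau$, so that \emph{every} supremizing point of $f_n$ — in particular $a(f_n)$ — must lie in $U_\epsilon$; this requires a separate treatment of the boundary cases $\tau=0$ and $\tau=1$. You instead argue by compactness and limit identification: you first get $M(f_n)\to M(f)$ from the $1$-Lipschitz property of $M$, then extract convergent subsequences of $\tau_n=a(f_n)$, split according to whether the supremum of $f_{n_k}$ is attained by the value $f_{n_k}(\tau_{n_k})$ or the left limit $f_{n_k}(\tau_{n_k}-)$, and in either case identify the subsequential limit $t^\ast$ as a maximizer of $f$, hence $t^\ast=\tau$ by uniqueness. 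Your version is more modular and handles $\tau\in\{0,1\}$ with no extra work, while the paper's version yields slightly more information (eventual localization of the whole set $A(f_n)$ near $\tau$, not just of its minimum) and avoids subsequence extraction. Both proofs share the two essential ingredients: the reduction of Skorokhod convergence to uniform convergence when the limit is continuous, and the approximation of left limits $f_n(t-)$ by nearby values $f_n(s_k)$ — your Case 2 is precisely the paper's step establishing its inequality for $f_n(t-)$, and you have executed it correctly (the points $s_k$ converge to $t^\ast$, so continuity of $f$ applies).
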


\begin{proof} Let $f \in C_u$ with unique minimizer $\tau=a(f)$ and let $(f_n)$ be a sequence in $D[0,1]$ such that $s(f_n,f) \rightarrow 0$.
Since $f$ is continuous we have that in fact
\begin{equation} \label{unifconv}
||f_n-f||=\sup_{0 \le t \le 1} |f_n(t)-f(t)| \rightarrow 0.
\end{equation}
Assume that $\tau \in (0,1)$. For an arbitrary $0< \epsilon \le \tau \wedge (1-\tau)$ introduce $U_\epsilon :=(\tau-\epsilon,\tau+\epsilon] \subseteq [0,1]$ with non-empty complement $V_\epsilon = [0,1] \setminus U_\epsilon = [0,\tau-\epsilon] \cup (\tau+\epsilon,1]$. Consider
$$
 m_\epsilon := \sup\{f(t): t \in V_\epsilon\}=\sup\{f(t): t \in \bar{V_\epsilon} \},
$$
where $\bar{V_\epsilon}$ is the closure of $V_\epsilon$ and equal to $[0,\tau-\epsilon] \cup [\tau+\epsilon,1]$. Here the second equality holds by continuity of $f$. Put $\delta_\epsilon :=\frac{1}{3}(f(\tau)-m_\epsilon)$. Then
$\delta_\epsilon > 0$, because otherwise $f(\tau)=m_\epsilon = f(\sigma)$ for some $\sigma \in \bar{V_\epsilon}$ upon noticing that $\bar{V_\epsilon}$ is compact. Consequently, $\sigma$  is a maximizing point, which differs from $\tau$, because it does not lie in $(\tau-\epsilon,\tau+\epsilon)$.
This is a contradiction to the uniqueness of $\tau$. Infer from (\ref{unifconv}) that there exists a natural number $n_0(\epsilon)$ such that
\begin{equation} \label{sd}
||f_n-f|| \le \delta_\epsilon \quad \forall \; n \ge n_0(\epsilon).
\end{equation}
Let $t \notin U_\epsilon$, so $t \in V_\epsilon$. Notice that
\begin{equation} \label{incr}
 f_n(\tau)-f_n(t) = [f_n(\tau)-f(\tau)]+[f(\tau)-f(t)]+[f(t)-f_n(t)].
\end{equation}
By (\ref{sd}) the first summand $f_n(\tau)-f(\tau)$ and the third summand $f(t)-f_n(t)$ are greater or equal $-\delta_\epsilon$.
As to the second summand observe that $f(t) \le m_\epsilon$, because $t \in V_\epsilon$. Thus $f(\tau)-f(t) \ge f(\tau)-m_\epsilon= 3 \delta_\epsilon$. Summing up we arrive at $f_n(\tau)-f_n(t) \ge -2 \delta_\epsilon +3 \delta_\epsilon = \delta_\epsilon$ or equivalently
\begin{equation} \label{bineq}
 f_n(t) \le f_n(\tau)-\delta_\epsilon < f_n(\tau) \quad \forall \; t \notin U_\epsilon \quad \forall \; n \ge n_0(\epsilon).
\end{equation}
From this basic inequality we can derive that also
\begin{equation} \label{bineq2}
 f_n(t-) < f_n(\tau) \quad \forall \; t \notin U_\epsilon \quad \forall \; n \ge n_0(\epsilon).
\end{equation}
To see this consider at first the case $t \in (0,\tau-\epsilon]$. Then there exists a sequence $(s_k)$ with $0<s_k \uparrow t$, whence by (\ref{bineq}) applied to $t=s_k$ it follows that
$f_n(t-)=\lim_{k \rightarrow \infty} f_n(s_k) \le f_n(\tau)-\delta_\epsilon<f_n(\tau)$. In the same way one can treat the case $t \in (\tau+\epsilon,1]$ and finally if $t=0$, then $f_n(0-)=f_n(0)$ by definition and another application of (\ref{bineq}) gives (\ref{bineq2}).
Now, (\ref{bineq}) and (\ref{bineq2}) show that
\begin{equation} \label{bineq3}
\max\{f_n(t),f_n(t-)\} < f_n(\tau) \quad \forall \; t \notin U_\epsilon \quad \forall \; n \ge n_0(\epsilon).
\end{equation}
Conclude that
\begin{equation} \label{inU}
 \tau_n := a(f_n) \in U_\epsilon \quad \forall \; n \ge n_0(\epsilon),
\end{equation}
because otherwise there exists an $n \ge n_0(\epsilon)$ such that $\tau_n \notin U_\epsilon$. But since $\tau_n$ is the (smallest) supremizing point
of $f_n$ we obtain with (\ref{bineq3}) that
$M(f_n)= \max\{f_n(\tau_n),f_n(\tau_n-)\} < f_n(\tau)$, a contradiction to $M(f_n)$ is the (least) upper bound of $f_n$.
In the extreme cases $\tau=0$ or $\tau=1$ one considers $U_\epsilon$ equal to $[0,\epsilon]$ or $(1-\epsilon,1]$, respectively, and the same
modus operandi as above leads to (\ref{inU}). Thereby we have shown that $a(f_n) \rightarrow a(f)$ whenever $f_n \rightarrow_s f$, which
means that $a$ is continuous at $f$.
\end{proof}

\end{appendix}

\end{document}